\newtheorem{lemma}{Lemma}
\newtheorem{proposition}{Proposition}
\newtheorem{remark}{Remark}
\newcommand{\pl}{\partial}
\title{A note on the adjoint method for neural ordinary differential equation network}
\author[1]{Pipi Hu\thanks{pisquare@microsoft.com}}
\affil[1]{MSR AI4Science, Beijing, China}
\date{\today}
\begin{document}
\maketitle

\begin{abstract}
Perturbation and operator adjoint method are used to give the right adjoint form rigourously. From the derivation, we can have following results:
1) The loss gradient is not an ODE, it is an integral and we shows the reason; 2) The traditional adjoint form is not equivalent with the back propagation results. 3) The adjoint operator analysis shows that if and only if the discrete adjoint has the same scheme with the discrete neural ODE, the adjoint form would give the same results as BP does. 
\end{abstract}

\begin{keywords}
sensitivity analysis, adjoint method, neural ode, auto differential, backpropagation
\end{keywords}

\section{Introduction}
The sensitivity analysis by adjoint method is an old topic in several fields, such as in geophysics, seismic imaging, photonics. Nowadays, the adjoint method is used in the neural ordinary differential equations to calculate the gradients of the cost function with respect to the training parameters. \cite{chen2018neural} gives  ``A Modern Proof of the Adjoint Method" in its appendix. The results are right, but the proof is a little vague. 

The adjoint method actually gives the gradients of the cost which is a Lagrange form of cost not the gradients of the cost directly. The dependence of the variables in the proof is ambiguous and may not be properly identified, which is confusing for understanding the adjoint method in the back propagation. Such these problems prevent the deep understanding and modulating to the existing formulas. Meanwhile, there almost does not exist a complete or direct proof for the adjoint methods in neural ode because of the special properties of the control problems in the neural ode. These reasons drive us to give a rigorous and easy-understanding proof for the adjoint method in such typical problem, by which, deeper works for modulating the details in the adjoint form can be done. Through the rigorous analysis, we can clarify that the gradients to the loss of multi labels sampled on one trajectory have different form from that in \cite{chen2018neural}.  

 By the calculus of variation used in optimal control we can get the right form of odes of adjoint variables and hence we can derive the gradients of the cost with respect to the control parameters (called training parameters in neural ode). We break this note into four main parts to introduce the results. The background of the neural ode is omitted and if not familiar, see the first two sections of \cite{chen2018neural}.
 
 Section 2 gives the setup of the optimization problem for the neural ode. In this section, we will exhibit the typical properties of the control problem for neural ode. In Section 3, we will give a rigorous proof of adjoint method by calculus of variation. The calculus of variation is generally used to derive the results of optimal control in \cite{liberzon2011calculus}. By a modulation to the technologies in \cite{liberzon2011calculus}, we can have a deeper comprehension and clear proof of the adjoint method. Then, we will show that for the neural ode, the property of time-independent control parameters makes the form of adjoint variable special. Actually, the first part of proof for adjoint method in \cite{chen2018neural} is using such special form of the adjoint variable to prove the adjoint dynamics. We would show that this definition is rigorous, but the derivation is ambiguous. Section 4 introduces a simple but not rigorous proof of the adjoint method by direct partial differential calculations. We want to present that, with much simpler differentiations, we can still get the right form of adjoint method but the proof itself is ambiguous with confusing variable dependence. In section 5, we claim that in the neural ode when the loss is calculated by multi labels in the same trajectory, the new gradients of the loss is slightly different from that in \cite{chen2018neural}.
 
 The loss integral is not an ODE which can be easily reduced from the derivation and it also stated in \cite{kidger2020hey} but not give an explanation.

 Besides the main part, we will show in appendix a version of the discrete form of the adjoint method  which is a rigorous restatement of the results in \cite{zhuang2019ordinary}. In the work of \cite{hu2020revealing}, we didn't use the adjoint method for the gradients of cost with respect to the training coefficients because of the instabilities of the adjoint methods. This discrete version takes the full advantages of the forward propagation, making back propagation stable as long as the stability holds for forward propagation. More works may be done by such contributions depending on the right comprehensions of the adjoint method.
 
\section{Problem setup}
We abstract the optimization problem of the neural ode as follows:
\begin{equation}
\label{eq.lo}
\begin{aligned}
\min_{\theta} \quad & \mathcal{L}(t_0,t_1,\theta(t),z(t_0)) = L(z(t_f))\\
\textrm{s.t.} \quad & \dot{z}(t)=f(t,z(t),\theta(t)),\\
\end{aligned}
\end{equation}
where $\mathcal{L}\in\mathbb{R}$ is a cost function (also called loss in machine learning), $\theta(t)\in \mathbb{R}^P$ is the control parameter (also called training parameters in machine learning) and $z(t_0)\in\mathbb{R}^N$ is the initial value at time $t_0$. $P$ is the number of the parameters and $N$ is the dimension of the trajectory of $z(t)$.  In typical optimal control theory, the cost $ \mathcal{L}$ can have running cost and terminal cost, but in this problem there only exists the terminal cost $L(z(t_f))$ which is always a distance between the labels and the predictions in the neural network. 

Further, the optimization problem \eqref{eq.lo} has the following properties.
\begin{enumerate}
\item {\bf The initial value} $z(t_0)$ of the trajectories is independent on the control parameters $\theta(t)$, i.e., for $\forall \theta\in \Theta$, $z(t_0)$ is given as a constant, where $\Theta$ is the possible parameter space. 
\item The initial time $t_0$ is given, and any trajectory perturbed by $\theta$ all ends at time $t_f$, i.e., the problem is {\bf fixed end time $t_f$} with {\bf free end point} $z(t_f)$.
\item $\theta(t)\equiv \theta$, i.e., $\theta(t)$ {\bf doesn't change} with time $t$. 
\end{enumerate}

For such problem, the adjoint method gives the following results.

\begin{proposition} Adjoint method:
The adjoint variable $a(t)$ satisfies an back propagation ODE
\begin{equation}
\dot{a}(t) = -a(t)\frac{\partial f}{\partial z}|_*, \, \text{with } a(t_f) = -\frac{\partial L}{\partial z(t_f)}|_*,
\end{equation}
and the gradients of the Lagrange cost $J(\theta) = L(z(t_f)) + \int_{t_0}^{t_f} a(t) \bigl(\dot{z}(t)-f(t,z(t),\theta(t))\bigr)dt$ has the form
\begin{equation}
\frac{dJ(\theta)}{d\theta} = -\int_{t_0}^{t_f}a(t)\frac{\partial f}{\partial \theta}|_*dt.
\end{equation}
\end{proposition}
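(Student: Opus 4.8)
The plan is to establish the proposition by the first-variation (perturbation) argument of the calculus of variations, using the three structural properties of \eqref{eq.lo}. The key preliminary observation is that whenever $z(t)$ satisfies the state equation $\dot z=f(t,z,\theta)$, the integral term in $J(\theta)$ vanishes identically, so that $J(\theta)=\mathcal{L}=L(z(t_f))$ \emph{for every} choice of the multiplier function $a(t)$. Consequently $dJ/d\theta = d\mathcal{L}/d\theta$, and we are free to pin down $a(t)$ afterwards in whatever way kills the unwanted terms.

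First I would introduce the one-parameter perturbation $\theta\mapsto\theta+\varepsilon\,\delta\theta$ and let $z^{\varepsilon}(t)$ be the solution of the state equation under the \emph{same} initial datum $z(t_0)$; this is where the first listed property (the initial value is independent of $\theta$) enters. Writing $z^{\varepsilon}(t)=z^*(t)+\varepsilon\,\delta z(t)+o(\varepsilon)$, the standard smooth-dependence theorem for ODEs gives that $\delta z$ solves the linear variational equation $\dot{\delta z}=\frac{\partial f}{\partial z}|_*\,\delta z+\frac{\partial f}{\partial\theta}|_*\,\delta\theta$ with the homogeneous initial condition $\delta z(t_0)=0$, where $|_*$ denotes evaluation along the nominal pair $(z^*,\theta)$. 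Because the end time $t_f$ is fixed (the second property), there is no terminal-time variation to carry along, which keeps the first variation clean.

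Next I would differentiate $J$ directly, obtaining
\[
\delta J=\frac{\partial L}{\partial z(t_f)}\Big|_*\delta z(t_f)+\int_{t_0}^{t_f}a(t)\Bigl(\dot{\delta z}(t)-\tfrac{\partial f}{\partial z}\big|_*\delta z(t)-\tfrac{\partial f}{\partial\theta}\big|_*\delta\theta\Bigr)\,dt,
\]
integrate the $a(t)\dot{\delta z}(t)$ term by parts, and discard the lower boundary term using $\delta z(t_0)=0$. Collecting the coefficient of $\delta z(t_f)$ and of $\delta z(t)$ inside the remaining integral yields
\[
\delta J=\Bigl(\tfrac{\partial L}{\partial z(t_f)}\big|_*+a(t_f)\Bigr)\delta z(t_f)-\int_{t_0}^{t_f}\Bigl(\dot a(t)+a(t)\tfrac{\partial f}{\partial z}\big|_*\Bigr)\delta z(t)\,dt-\int_{t_0}^{t_f}a(t)\tfrac{\partial f}{\partial\theta}\big|_*\,\delta\theta\,dt.
\]
I would then \emph{define} $a(t)$ so as to annihilate the two $\delta z$-contributions, i.e.\ impose $a(t_f)=-\frac{\partial L}{\partial z(t_f)}|_*$ and $\dot a(t)=-a(t)\frac{\partial f}{\partial z}|_*$; these are exactly the claimed terminal condition and backward adjoint ODE. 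What survives is $\delta J=-\bigl(\int_{t_0}^{t_f}a(t)\frac{\partial f}{\partial\theta}|_*\,dt\bigr)\delta\theta$, where the third property ($\theta$ is time-independent, hence so is $\delta\theta$) lets $\delta\theta$ come out of the integral; reading off the linear functional of $\delta\theta$ gives $dJ/d\theta=-\int_{t_0}^{t_f}a(t)\frac{\partial f}{\partial\theta}|_*\,dt$.

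The main obstacle is not an analytic estimate but faithful bookkeeping of the variable dependence, which is precisely the point the paper wants to sharpen: one must keep $a(t)$ as a free multiplier that is fixed only \emph{after} the constraint has been used, be explicit that $\delta z(t_0)=0$ is a consequence of $z(t_0)$ not depending on $\theta$, and recognize that the final gradient is a genuine \emph{integral} over $[t_0,t_f]$ rather than the endpoint value of some ODE, since $\theta$ acts globally along the trajectory. On the technical side, the perturbation step requires $f$ and $L$ to be $C^1$ and the variational equation to be well posed on the whole interval $[t_0,t_f]$, which is what legitimizes exchanging $d/d\varepsilon$ with the time integral.
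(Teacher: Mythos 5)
Your proposal is correct and follows essentially the same route as the paper's own proof: perturb $\theta$, derive the linear variational equation for $\delta z$ with $\delta z(t_0)=0$, integrate the $a(t)\,\dot{\delta z}$ term by parts, and choose the multiplier to annihilate the $\delta z(t_f)$ and $\delta z(t)$ contributions before factoring out the time-independent $\delta\theta$. Your opening remark that $J(\theta)=\mathcal{L}(\theta)$ identically on the constraint manifold (so the adjoint gradient is also the gradient of the true cost) is a small but worthwhile clarification that the paper does not make explicit.
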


In the next section, we will give a rigorous proof of the adjoint method for \eqref{eq.lo} with the three properties. For a deeper understanding, we first solve the optimization problem of \eqref{eq.lo} with the first two properties, i.e., we assume $\theta(t)$ depends on time $t$. And then we use the third property to show that how this property makes the problem different from traditional optimal control problem for a better comprehension of adjoint method in neural ode.

Before the proof, we make the agreement of the dimension of the matrix generated from partial differentiations.
We treat vector as matrix with its first or the second dimension one. For example, a $N$-size row vector is a $1\times N$ matrix and $N$-size column vector is a $N\times 1$ matrix. For the partial differentiations, $A_{ij}=\frac{\partial g_i}{\partial x_j}$ is in the $i^{\text{th}}$ row and $j^{\text{th}}$ column of the $|g|\times |x|$-size matrix A, where $i=1,2,\cdots, |g|, \, j=1,2,\cdots, |x|$,  and $|g|,$  $|x|$ represent the sizes of vector $g$ and $x$ respectively. Particularly, $A_{i}=\frac{\partial g_i}{\partial x}$ represents a column vector when $|x|=1$ and $A_{i}=\frac{\partial g}{\partial x_i}$ represent a row vector when $|g|=1$. They also represent $|g|\times 1$-size matrix and $1\times |x|$-size matrix in the view of matrix.

\section{A rigorous proof of the adjoint method}
The general idea is that we do the small perturbations on the control parameters $\theta$ to see how the perturbations influence the cost function. 

Let 
\begin{equation}
\theta = \theta^*+\epsilon\zeta,
\end{equation}
where $\theta^* = \theta^*(t)$ is the original control parameter to be perturbed and $\theta = \theta(t)$ is the perturbed parameter, and $\zeta=\zeta(t)$ is a perturbation with a small perturbation factor $\epsilon$.

The first question arises, how is the perturbed trajectory $z(t)$ to the original one $z^*(t)$?
Here we have 
\begin{equation}
z(t) = z(t)^* + \epsilon\eta(t) + \mathcal{O}(\epsilon^2).
\end{equation}
We can't say that the corresponding perturbations of $z(t)$ is linear, but we can say the nonlinear term is a higher order of $\epsilon^2$ with the perturbation factor $\epsilon$ small enough. The perturbation scheme is shown in Figure \ref{fig.rela}.

\begin{figure}[htp]
\includegraphics[width=\textwidth]{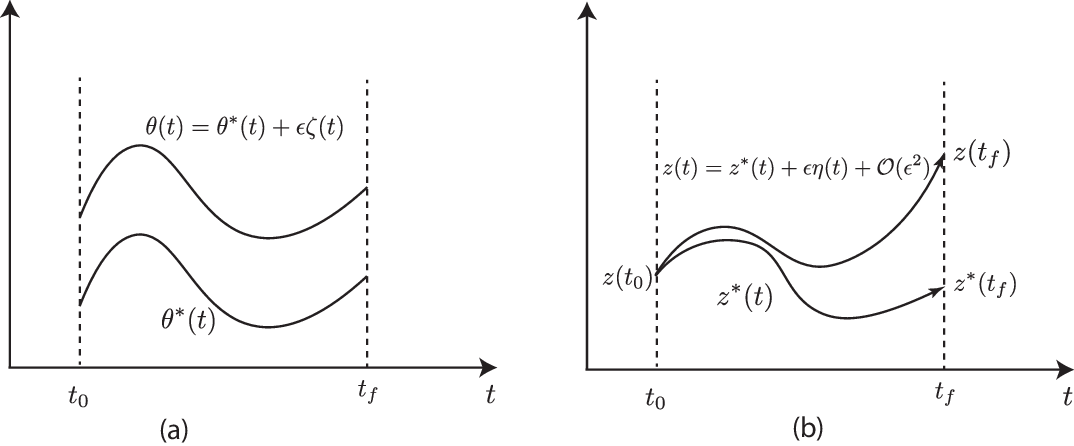}
\caption{The perturbation schemes. (a): the perturbation of $\theta$. (b) the perturbation of $z$ based on the perturbation of $\theta$.}
\label{fig.rela}
\end{figure}

\begin{lemma}
\label{lemma.eta}
The perturbations $\eta(t)$ to the trajectory caused by the perturbations of parameters also satisfies an ODE
\begin{equation}
\dot{\eta}(t) = f_z|_*\eta(t) + f_\theta|_*\zeta(t),
\end{equation}
where $f_z|_* = \frac{\partial f}{\partial z}|_{z = z^*(t)}$ and $f_\theta|_* = \frac{\partial f}{\partial \theta}|_{\theta = \theta^*(t)}$.
\end{lemma}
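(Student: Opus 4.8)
The plan is to derive the $\eta$-equation by substituting the perturbation ans\"atze into the state equation and collecting the first-order terms in $\epsilon$. Both the unperturbed and perturbed trajectories solve the \emph{same} ODE, so $\dot z^*(t) = f(t,z^*(t),\theta^*(t))$ and $\dot z(t) = f(t,z(t),\theta(t))$ with $\theta(t) = \theta^*(t) + \epsilon\zeta(t)$ and $z(t) = z^*(t) + \epsilon\eta(t) + \mathcal{O}(\epsilon^2)$. Subtracting the two equations, dividing by $\epsilon$, and Taylor-expanding $f$ along the unperturbed path should isolate the claimed linear variational equation.

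Concretely, I would write $\dot z(t) - \dot z^*(t) = \epsilon\dot\eta(t) + \mathcal{O}(\epsilon^2)$ on the left (this uses that the $\epsilon$-expansion of $z$ may be differentiated in $t$ term by term, which is where a regularity hypothesis on $f$ enters — see below), and on the right expand
\[
f\bigl(t, z^*+\epsilon\eta+\mathcal{O}(\epsilon^2), \theta^*+\epsilon\zeta\bigr) = f(t,z^*,\theta^*) + \epsilon\bigl(f_z|_*\,\eta(t) + f_\theta|_*\,\zeta(t)\bigr) + \mathcal{O}(\epsilon^2),
\]
with $f_z|_*$ and $f_\theta|_*$ the Jacobians evaluated along $z^*(t),\theta^*(t)$ as in the statement. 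Cancelling $f(t,z^*,\theta^*)=\dot z^*(t)$ from both sides and equating the coefficients of $\epsilon^1$ yields $\dot\eta(t) = f_z|_*\eta(t) + f_\theta|_*\zeta(t)$. I would also record the accompanying initial condition: by property~1 of the setup, $z(t_0)$ is independent of $\theta$, so $z(t_0) = z^*(t_0)$ for every $\epsilon$, hence $\eta(t_0)=0$; this makes the variational equation well-posed and is needed when the lemma is used later.

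The main obstacle — the thing the informal ``$\mathcal{O}(\epsilon^2)$'' notation hides — is justifying that the remainder in $z(t) = z^*(t) + \epsilon\eta(t) + \mathcal{O}(\epsilon^2)$ is genuinely second order \emph{uniformly} on the compact interval $[t_0,t_f]$, and that it stays second order after differentiation in $t$. The clean route is to assume $f$ is $C^2$ (or $C^1$ with locally Lipschitz derivatives) in $(z,\theta)$, let $\eta$ solve the linear equation above with $\eta(t_0)=0$, define the residual $r(t,\epsilon) := z(t) - z^*(t) - \epsilon\eta(t)$, and apply Gr\"onwall's inequality to its integral form
\[
r(t,\epsilon) = \int_{t_0}^{t}\Bigl[f(s,z,\theta) - f(s,z^*,\theta^*) - \epsilon f_z|_*\eta(s) - \epsilon f_\theta|_*\zeta(s)\Bigr]ds,
\]
to conclude $\sup_{[t_0,t_f]}\lVert r(t,\epsilon)\rVert = \mathcal{O}(\epsilon^2)$. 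This is exactly the statement of the classical differentiability-of-the-flow / continuous-dependence theorem, so I would either invoke that result directly or sketch the Gr\"onwall estimate; the remaining content of the lemma is the routine first-order bookkeeping described above.
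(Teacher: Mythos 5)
Your proposal is correct and arrives at the lemma by essentially the same first-order perturbation argument the paper uses: both linearize the state equation $\dot z = f(t,z,\theta)$ along $(z^*,\theta^*)$ and read off the coefficient of $\epsilon$. The paper's version phrases this as differentiating in $\epsilon$ at $\epsilon=0$ and commuting $\partial_t$ with $\partial_\epsilon$ (writing $\eta = z_\epsilon|_{\epsilon=0}$ and $\dot\eta = \frac{d}{d\epsilon}|_{\epsilon=0} f$), whereas you subtract the two ODEs and Taylor-expand; these are interchangeable. Where you go beyond the paper is in (i) recording the initial condition $\eta(t_0)=0$, which the paper omits from the lemma even though it is needed downstream, and (ii) identifying the actual analytic content --- that the remainder in $z = z^* + \epsilon\eta + \mathcal{O}(\epsilon^2)$ is uniformly second order on $[t_0,t_f]$ and remains so after differentiation in $t$ --- and proposing the standard Gr\"onwall estimate on the residual to justify it. The paper's proof silently assumes this (the interchange $z_{t,\epsilon}=z_{\epsilon,t}$ presupposes the smoothness of the flow in $\epsilon$ that your Gr\"onwall argument would establish), so your version is the more rigorous of the two and closes a gap the paper leaves open despite its stated aim of rigor.
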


\begin{proof}
It is easy to check that $\eta(t) = z_\epsilon(t)|_{\epsilon=0}$. Further, we have 
\begin{equation}
\dot{\eta}(t) = z_{t,\epsilon}(t)|_{\epsilon=0} =  z_{\epsilon,t}(t)|_{\epsilon=0} = \frac{d}{d\epsilon}|_{\epsilon=0} z_t(t),
\end{equation}
and recall that the constraint $\dot{z}(t)=f(t,z,\theta)$. It follows that 
\begin{equation}
\dot{\eta}(t) = \frac{d}{d\epsilon}|_{\epsilon=0} f(t,z^*+\epsilon\eta+\mathcal{O}(\epsilon^2),\theta^*+\epsilon\zeta)=
f_z|_*\eta(t) + f_\theta|_*\zeta(t).
\end{equation}
This completes the proof.
\end{proof}

To prove the adjoint method by calculus of variation, we first transform the optimization problem to the Lagrange
form by introducing a general form of Lagrange multiplier $a(t)$ as 
\begin{equation}
\label{eq.l}
\begin{aligned}
\min_{\theta} \quad & J(\theta) = L(z(t_f)) + \int_{t_0}^{t_f} a(t) \bigl(\dot{z}(t)-f(t,z(t),\theta(t))\bigr)dt,\\
\end{aligned}
\end{equation}
and the three properties above still hold.

To clarify the influence on the Lagrange cost function by perturbations of parameters, we calculate the difference directly as 
\begin{equation}
\label{eq.j}
J(\theta)-J(\theta^*) = L(z(t_f)) - L(z^*(t_f)) + \int_{t_0}^{t_f} a(t)(\dot{z}(t)-\dot{z}^*(t))dt - \int_{t_0}^{t_f} a(t)(f(t,z,\theta)-f(t,z^*,\theta^*))dt,
\end{equation}
where  $\theta = \theta^*+\epsilon\zeta$ and $z(t) = z(t)^* + \epsilon\eta(t) + \mathcal{O}(\epsilon^2)$. 
Calculate \eqref{eq.j} term by term. The first term yields
\begin{equation}\label{eq.first}
L(z(t_f)) - L(z^*(t_f)) = L_{z(t_f)}|_*\epsilon\eta(t_f)+\mathcal{O}(\epsilon^2).
\end{equation}
The second term yields
\begin{equation}\label{eq.second}
\begin{aligned}
\int_{t_0}^{t_f} a(t)(\dot{z}(t)-\dot{z}^*(t))dt &= a(t)(z(t)-z^*(t))|_{t_0}^{t_f} - \int_{t_0}^{t_f} \dot{a}(t)(z(t)-z^*(t))dt\\
&= a(t_f)\epsilon\eta(t_f)- \int_{t_0}^{t_f} \dot{a}(t)\epsilon\eta(t)dt+\mathcal{O}(\epsilon^2),\\
\end{aligned}
\end{equation}
where $z(t_0)-z^*(t_0)=0$ based on the first property. 
Similarly, direct calculation of the third term gives 
\begin{equation}\label{eq.third}
- \int_{t_0}^{t_f} a(t)\bigl(f(t,z^*+\epsilon\eta+\mathcal{O}(\epsilon^2),\theta^*+\epsilon\zeta)-f(t,z^*,\theta^*)\bigr)dt
=- \int_{t_0}^{t_f} a(t)\bigl(f_z|_*\epsilon\eta + f_\theta|_*\epsilon\zeta\bigr)dt+\mathcal{O}(\epsilon^2).
\end{equation}

Substitute \eqref{eq.first}-\eqref{eq.third} to  \eqref{eq.j}, the difference of the Lagrange cost  becomes
\begin{equation}
J(\theta) - J(\theta^*) = \epsilon\eta(t_f)\bigl(a(t_f)+L_z|_*\bigr) - \int_{t_0}^{t_f}\bigl(\dot{a}(t)+a(t)f_z|_*\bigr)\epsilon\eta(t)dt - \int_{t_0}^{t_f}a(t)f_\theta|_*\epsilon\zeta(t)dt+\mathcal{O}(\epsilon^2).
\end{equation}

Till now, we have nothing constraints on the Lagrange multiplier $a(t)$. To derive a compact form of the gradients of the Lagrange cost, let 
\begin{equation}
\dot{a}(t) = -a(t)f_z|_*=-a(t)\frac{\partial f}{\partial z}|_*, \, \text{ with } a(t_f) = -L_{z(t_f)}|_*=-\frac{\partial L}{\partial z(t_f)}|_*,
\end{equation}
which determines a trajectory of the Lagrange multiplier.

Then the difference of the Lagrange cost becomes 
\begin{equation}
J(\theta) - J(\theta^*) = -\int_{t_0}^{t_f} a(t)f_\theta|_*\epsilon\zeta(t)dt.
\end{equation}
{\bf \color{red} It seems there is no need for the constant of $\theta$?}
Further, from the third property $\zeta(t)\equiv\zeta$ is a constant along time, i.e., $\theta(t)\equiv\theta$. Then we have 
\begin{equation}
\frac{dJ(\theta)}{d\theta} = \lim_{\epsilon\to 0} \frac{J(\theta)-J(\theta^*)}{\epsilon\zeta} = -\int_{t_0}^{t_f}a(t)f_\theta|_*dt=-\int_{t_0}^{t_f}a(t)\frac{\partial f}{\partial \theta}|_*dt.
\end{equation}
This completes the proof.

From the proof of the adjoint method, we can find that the constant property makes the partial differentiations easily obtained, which makes the problem distinct with the general control problems.

As the Lagrange multiplier $a(t)$ satisfies the following ODE
\begin{equation*}
\dot{a}(t) = -a(t)f_z|_* = -f_z^T|_*a(t).
\end{equation*}
Recall that $\eta(t)$ satisfies 
\begin{equation*}
\dot{\eta}(t) = f_z|_*\eta(t) + f_\theta|_*\zeta(t).
\end{equation*}

Two linear systems $\dot{x}=Ax$ and $\dot{z}=-A^Tz$ are said to be adjoint to each other, and thus for this reason the multiplier $a(t)$ is said to be an adjoint variable.

\begin{lemma}\label{lemma.a}
For the control problem of the neural ode, the multiplier $a(t)$ chosen above have the following form
\begin{equation}
a(t) = - \frac{\partial L}{\partial z}.
\end{equation}
\end{lemma}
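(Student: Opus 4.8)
The plan is to show that the function $b(t) := -\,\partial L(z(t_f))/\partial z(t)$ — the sensitivity of the terminal loss to a perturbation of the trajectory at time $t$, transported by the flow of the constraint ODE — solves exactly the terminal-value problem that defines $a(t)$, and then conclude by uniqueness for linear ODEs. First I would make precise what $\partial L/\partial z(t)$ means. Fix $t^\star\in[t_0,t_f]$ and a vector $v\in\mathbb{R}^N$, perturb the state at time $t^\star$ to $z(t^\star)+\epsilon v$ while keeping $\theta$ fixed, and let $\eta(s)$ be the first-order response of $z(s)$ for $s\ge t^\star$. Exactly as in Lemma \ref{lemma.eta}, but with $\zeta\equiv 0$ and ``initial'' condition $\eta(t^\star)=v$, this $\eta$ satisfies $\dot\eta(s)=f_z|_*\,\eta(s)$ on $[t^\star,t_f]$; the row vector $\partial L/\partial z(t^\star)$ is then defined by $(\partial L/\partial z(t^\star))\,v = L_{z(t_f)}|_*\,\eta(t_f)$ for all $v$.

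Next I would compute $\frac{d}{ds}\bigl(a(s)\eta(s)\bigr)$ on $[t^\star,t_f]$: using the adjoint ODE $\dot a=-a f_z|_*$ and $\dot\eta=f_z|_*\eta$, the two contributions cancel, $\frac{d}{ds}(a\eta)=\dot a\,\eta+a\,\dot\eta=-a f_z|_*\eta+a f_z|_*\eta=0$, so $a(s)\eta(s)$ is constant on $[t^\star,t_f]$. Hence $a(t^\star)\,v = a(t^\star)\eta(t^\star) = a(t_f)\eta(t_f)$. Plugging in the terminal condition $a(t_f)=-L_{z(t_f)}|_*$ and the definition above gives $a(t^\star)\,v = -L_{z(t_f)}|_*\,\eta(t_f) = -(\partial L/\partial z(t^\star))\,v$; since $v$ is arbitrary, $a(t^\star)=-\partial L/\partial z(t^\star)$, and since $t^\star$ was arbitrary this is the claim.

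Alternatively one can argue purely at the ODE level: writing the state-transition matrix $\Phi(s,t)=\partial z(s)/\partial z(t)$ of the variational equation, we have $\partial L/\partial z(t)=L_{z(t_f)}|_*\,\Phi(t_f,t)$, and differentiating the semigroup identity $\Phi(t_f,s)\Phi(s,t)=\Phi(t_f,t)$ in $t$ yields $\partial_t\Phi(t_f,t)=-\Phi(t_f,t)\,f_z|_*(t)$; hence $b(t)=-\partial L/\partial z(t)$ obeys $\dot b=-b f_z|_*$ with $b(t_f)=-L_{z(t_f)}|_*$, which is precisely the problem defining $a$, so $a\equiv b$. Either route also makes transparent why this matches the ``adjoint state'' of \cite{chen2018neural}.

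I expect the main obstacle to be conceptual rather than computational: pinning down the meaning of the symbol $\partial L/\partial z(t)$ (it is a total derivative through the flow of $\dot z=f$, not a naive partial derivative of the static terminal map $L$), and justifying that the perturbation calculus of Lemma \ref{lemma.eta} transfers verbatim to a perturbation injected at an interior time $t^\star$ with vanishing control perturbation — i.e.\ that the linearization is valid and the $\mathcal{O}(\epsilon^2)$ remainder is genuinely of that order, uniformly on $[t^\star,t_f]$. Once that is granted, the cancellation $\frac{d}{ds}(a\eta)=0$ together with the terminal condition and uniqueness do all the remaining work.
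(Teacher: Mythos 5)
Your proposal is correct, and your primary argument takes a genuinely different route from the paper. The paper defines $\lambda(t)=-\partial L/\partial z(t)$ and shows directly that it satisfies the adjoint terminal-value problem: it expands $\partial z(t+\epsilon)/\partial z(t)=I+\epsilon f_z|_*+\mathcal{O}(\epsilon^2)$ from the integral form of the constraint, applies the chain rule $\lambda(t)=-\frac{\partial L}{\partial z(t+\epsilon)}\frac{\partial z(t+\epsilon)}{\partial z(t)}$, obtains $\dot\lambda=-\lambda f_z|_*$ with $\lambda(t_f)=-L_{z(t_f)}|_*$, and concludes $\lambda\equiv a$ by uniqueness. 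Your \emph{alternative} sketch via the state-transition matrix $\Phi(t_f,t)$ and the semigroup identity is exactly this argument made rigorous (the paper's $\epsilon$-expansion is the infinitesimal version of $\partial_t\Phi(t_f,t)=-\Phi(t_f,t)f_z|_*$), so that route matches the paper. Your \emph{main} argument, however, is different in mechanism: rather than showing $-\partial L/\partial z(t)$ solves the adjoint ODE, you exploit the bilinear invariant $\frac{d}{ds}\bigl(a(s)\eta(s)\bigr)=0$ to evaluate $a(t^\star)v$ directly by transporting the pairing to $t_f$. This buys two things the paper's proof lacks: it never has to differentiate the (informally defined) quantity $\partial L/\partial z(t)$ in time, and it forces you to pin down what that symbol means (the derivative of $L$ through the flow, i.e.\ $L_{z(t_f)}|_*\Phi(t_f,t)$), which the paper leaves implicit. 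The cost is that you invoke the variational equation of Lemma \ref{lemma.eta} for a state perturbation injected at an interior time, a step you correctly flag as needing the same linearization justification; the paper avoids this by working only with the local $\epsilon$-expansion of the flow. Both arguments land on the same terminal condition and the same conclusion, and both are sound.
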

\begin{proof}
Let 
\begin{equation}\label{eq.dl}
\lambda(t) = - \frac{\partial L}{\partial z},
\end{equation}
and we can see that the $\lambda$ is actually the function of both $t$ and $\theta$.
 
And the derivation to t shows that
\begin{equation}\label{eq.lambda}
\frac{d\lambda}{dt} = \frac{\partial \lambda}{\partial t} + \frac{\partial \lambda}{\partial \theta}\frac{\partial \theta}{\partial t} =  \frac{\partial \lambda}{\partial t}=\lim_{\epsilon\to 0} \frac{\lambda(t+\epsilon)-\lambda(t)}
{\epsilon}.
\end{equation}

And 
\begin{equation}
\lambda(t+\epsilon) = -\frac{\partial L}{\partial z(t+\epsilon)},\quad \lambda(t) = -\frac{\partial L}{\partial z(t)} = -\frac{\partial L}{\partial z(t+\epsilon)}\frac{\partial z(t+\epsilon)}{\partial z(t)},
\end{equation}
thus we have 
\begin{equation}\label{eq.lambdad}
\lambda(t+\epsilon)-\lambda(t)=\frac{\partial L}{\partial z(t+\epsilon)}\biggl(\frac{\partial z(t+\epsilon)}{\partial z(t)}-I\biggr).
\end{equation}

Recall 
\begin{equation}
\dot{z}(t) = f(t,\theta,z),
\end{equation}
we have 
\begin{equation}
z(t+\epsilon) = \int_t^{t+\epsilon} f(t,\theta,z)dt + z(t).
\end{equation}
It follows that 
\begin{equation}\label{eq.zep}
\frac{\partial z(t+\epsilon)}{\partial z(t)} = f_z(t,\theta,z)\epsilon+I +\mathcal{O}(\epsilon^2).
\end{equation}

Substitute \eqref{eq.zep} into \eqref{eq.lambdad} and combine the result with \eqref{eq.lambda} we can finally reach
\begin{equation}
\dot{\lambda}(t) = \lim_{\epsilon\to0} \frac{\partial L}{\partial z(t+\epsilon)}\frac{\partial f}{\partial z}+\mathcal{O}(\epsilon^2)=-\lambda(t)\frac{\partial f}{\partial z}.
\end{equation}
Further the definition of $\lambda(t)$ in \eqref{eq.dl} implies 
\begin{equation}
\lambda(t_f) = -\frac{\partial L}{\partial z(t_f)}.
\end{equation}
Till now we have proved that $\lambda(t)$ and $a(t)$ satisfies the same initial value problem of ODE and by the existence and uniqueness of the ODE, we can say that $\lambda(t)=a(t)$. This completes the proof.
\end{proof}
Lemma \ref{lemma.a} tells us that the adjoint variable can be a form of partial differentiations of cost to the trajectory variable.

\section{A direct and simple but not rigorous proof of the adjoint method}
In this section, the gradients of Lagrange cost function $J(\theta)$ with respect to the parameters $\theta$ are directly calculated by partial differentiations.

Recall that 
\begin{equation}\label{eq.J}
\begin{aligned}
J(\theta) &= L(z(t_f)) + \int_{t_0}^{t_f} a(t) \bigl(\dot{z}(t)-f(t,z(t),\theta(t))\bigr)dt,\\
&= L(z(t_f)) + a(t)z(t)|_{t_0}^{t_f} - \int_{t_0}^{t_f} \dot{a}(t)z(t)dt - \int_{t_0}^{t_f} a(t) f(t,z(t),\theta(t))dt.\\
\end{aligned}
\end{equation}
Perform the differentiations to $\theta$ on both sides of \eqref{eq.J} and we have 
\begin{equation}
\begin{aligned}
\frac{dJ(\theta)}{\partial \theta} &= \frac{\partial L}{\partial z(t_f)} \frac{\partial z(t_f)}{\partial \theta} + \frac{d}{d\theta} a(t_f)z(t_f) - \frac{d}{d\theta} a(t_0)z(t_0) - \int_{t_0}^{t_f}\dot{a}(t)\frac{\partial z}{\partial \theta} dt - \int_{t_0}^{t_f} a(t)\bigl(\frac{\partial f}{\partial z}\frac{\partial z}{\partial \theta}+\frac{\partial f}{\partial \theta}\bigr)dt\\
& = \frac{\partial z(t_f)}{\partial\theta}\bigl(a(t_f)+\frac{\partial L}{\partial z(t_f)}\bigr) - \int_{t_0}^{t_f}\bigl(\dot{a}(t)+a(t)\frac{\partial f}{\partial z}\frac{\partial z}{\partial\theta}\bigr)dt - \int_{t_0}^{t_f} a(t)\frac{\partial f}{\partial \theta}dt,
\end{aligned}
\end{equation}
where we have used the fact $\frac{d}{d\theta} a(t_0)z(t_0)=0$ based on the first property.

Let $\dot{a}(t)=-a(t)\frac{\partial f}{\partial z(t)}$ with $a(t_f)=-\frac{\partial L}{\partial z(t_f)}$, we have the compact form of the gradients as 
\begin{equation}
\frac{d J(\theta)}{d\theta} = -\int_{t_0}^{t_f} a(t)\frac{\partial f}{\partial \theta}dt.
\end{equation}
This completes the proof.

This proof is ambiguous about the dependence of the variables. If no more insight into this problem, it may cause errors. But it is indeed a fast approach to the right form of the adjoint method.

\section{The optimization of the loss with multi labels}
There are always multi labels on the trajectory, e.g., we are given the values of several time snapshots along the trajectory which can be used as labels to compute the total loss.

The problem reads 
\begin{equation}
\label{eq.ml}
\begin{aligned}
\min_{\theta} \quad & \mathcal{L}(t_0,t_1,\cdots,t_N,\theta(t),z(t_0)) = L(z(t_1),z(t_2),\cdots,z(t_N))\\
\textrm{s.t.} \quad & \dot{z}(t)=f(t,z(t),\theta(t)),\\
\end{aligned}
\end{equation}
where the only difference from \eqref{eq.lo} is the dependence of loss $L$ on $z(t_1),z(t_2),\cdots,z(t_N)$ not just on $z(t_N)$ (denoted as $z(t_f)$ in \eqref{eq.lo}). The three properties also hold with terminal cost changing into a general function of several points on the trajectory.

By introducing the Lagrange multiplier $a(t)$ as before, the optimization problem \eqref{eq.ml} becomes
\begin{equation}
\min_{\theta} J(\theta) = L(z(t_1),z(t_2),\cdots,z(t_N)) +  \int_{t_0}^{t_N} a(t)(\dot{z}(t)-f(t,z(t),\theta(t))).
\end{equation}
The schemes of this optimization problem are shown in Figure \ref{fig.rela2}.

\begin{figure}[htp]
\includegraphics[width=\textwidth]{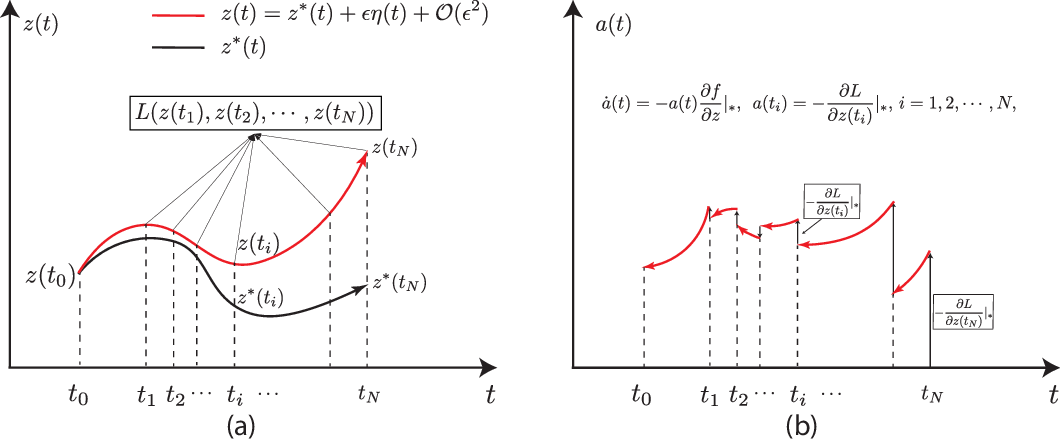}
\caption{The perturbation schemes. (a): the perturbation of $\theta$. (b) the perturbation of $z$ based on the perturbation of $\theta$ and the times of multi labels are denoted. $t_i$ represent the $i^{th}$ label on the time line with the corresponding trajectory value $z(t_i)$.}
\label{fig.rela2}
\end{figure}
 
 We do the perturbation as before, and the difference of the Lagrange cost have the form
 \begin{equation}
  \begin{aligned}
 J(\theta) - J(\theta^*) = &\bigl(L(z(t_1),z(t_2)\cdots,z(t_N)) - L(z^*(t_1),z^*(t_2)\cdots,z^*(t_N))\bigr)\\ 
 &+ \sum_{i=1}^N\int_{t_0}^{t_i} a(t)\bigl(\dot{z}(t)-\dot{z}^*(t)\bigr)dt \\
 &-\sum_{i=1}^N\int_{t_0}^{t_i} a(t)\bigl(f(t,z,\theta)-f(t,z^*,\theta^*)\bigr)dt.
  \end{aligned}
 \end{equation}
 
 The first term yields
\begin{align}
\begin{split}
\label{eq.first}
L(z(t_1)\cdots,z(t_N)) - L(z^*(t_1)\cdots,z^*(t_N)) &= \sum_{i=1}^NL_{z(t_i)}|_*\epsilon\eta(t_i)+\mathcal{O}(\epsilon^2)\\
&=\int_{t_0}^{t_N}\sum_{i=1}^NL_{z(t_i)}|_*\epsilon\eta(t)\delta(t-t_i)dt+\mathcal{O}(\epsilon^2).
\end{split}
\end{align}
The second term yields
\begin{equation}\label{eq.second}
\begin{aligned}
\int_{t_0}^{t_N} a(t)(\dot{z}(t)-\dot{z}^*(t))dt &= a(t)(z(t)-z^*(t))|_{t_0}^{t_N} - \int_{t_0}^{t_N} \dot{a}(t)(z(t)-z^*(t))dt\\
&= - \int_{t_0}^{t_N} \dot{a}(t)\epsilon\eta(t)dt+\mathcal{O}(\epsilon^2),\\
\end{aligned}
\end{equation}
where we have used the fact $z(t_0)-z^*(t_0)=0$ and the presume 
\begin{equation}
a(t_N)=0.
\end{equation}
Similarly, direct calculation of the third term gives 
\begin{equation}\label{eq.third}
- \int_{t_0}^{t_N} a(t)\bigl(f(t,z^*+\epsilon\eta+\mathcal{O}(\epsilon^2),\theta^*+\epsilon\zeta)-f(t,z^*,\theta^*)\bigr)dt
=- \int_{t_0}^{t_N} a(t)\bigl(f_z|_*\epsilon\eta + f_\theta|_*\epsilon\zeta\bigr)dt+\mathcal{O}(\epsilon^2).
\end{equation}

 Compute the difference term by term as in  Chapter 3, we obtain 
 \begin{equation}
 \begin{aligned}
 J(\theta) - J(\theta^*) =  &- \int_{t_0}^{t_N}\bigl(\dot{a}(t)+a(t)\frac{\partial f}{\partial z}|_*-\sum_{i=1}^N\frac{\partial L}{\partial z(t_i)}|_*\delta(t-t_i)\bigr)\epsilon \eta(t)dt \\
 & -\int_{t_0}^{t_N}a(t)\frac{\partial f}{\partial \theta}|_*\epsilon\zeta(t)dt+ \mathcal{O}(\epsilon^2).\\ 
 \end{aligned}
 \end{equation}
 
Let the adjoint equation as
\begin{equation}\label{eq.adjoint}
\dot{a}(t)=-a(t)\frac{\partial f}{\partial z}|_*, \quad \text{with } a^{initial}(t_i)=a(t_i)-\frac{\partial L}{\partial z(t_i)}|_*,\, i=N,N-1,\cdots 1,
\end{equation}
we have the compact form of the difference as 
\begin{equation}
J(\theta) - J(\theta^*) =  -\int_{t_0}^{t_N}a(t)\frac{\partial f}{\partial \theta}|_*\epsilon\zeta(t)dt + \mathcal{O}(\epsilon^2).
\end{equation}
Further when $\theta(t)\equiv\theta$, i.e., $\zeta(t)\equiv \zeta$, we have the gradients of the Lagrange loss as 
\begin{equation}\label{eq.lossg}
\frac{d J(\theta)}{d \theta} =  -\int_{t_0}^{t_N}a(t)\frac{\partial f}{\partial \theta}|_*dt.
\end{equation}
Such form is the same with that in \cite{chen2018neural}.

\section{The optimization of the loss with multi labels (Another Form)}
There are always multi labels on the trajectory, e.g., we are given the values of several time snapshots along the trajectory which can be used as labels to compute the total loss.

The problem reads 
\begin{equation}
\label{eq.ml}
\begin{aligned}
\min_{\theta} \quad & \mathcal{L}(t_0,t_1,\cdots,t_N,\theta(t),z(t_0)) = L(z(t_1),z(t_2),\cdots,z(t_N))\\
\textrm{s.t.} \quad & \dot{z}(t)=f(t,z(t),\theta(t)),\\
\end{aligned}
\end{equation}
where the only difference from \eqref{eq.lo} is the dependence of loss $L$ on $z(t_1),z(t_2),\cdots,z(t_N)$ not just on $z(t_N)$ (denoted as $z(t_f)$ in \eqref{eq.lo}). The three properties also hold with terminal cost changing into a general function of several points on the trajectory.

By introducing the Lagrange multiplier $a(t)$ as before, the optimization problem \eqref{eq.ml} becomes
\begin{equation}
\min_{\theta} J(\theta) = L(z(t_1),z(t_2),\cdots,z(t_N)) + \sum_{i=1}^N \int_{t_0}^{t_i} a(t)(\dot{z}(t)-f(t,z(t),\theta(t))).
\end{equation}

 We do the perturbation as before, and the difference of the Lagrange cost have the form
 \begin{equation}
  \begin{aligned}
 J(\theta) - J(\theta^*) = &L\bigl(z(t_1),z(t_2)\cdots,z(t_N)\bigr) - L\bigl(z^*(t_1),z^*(t_2)\cdots,z^*(t_N)\bigr)\\ 
 &+ \sum_{i=1}^N\int_{t_0}^{t_i} a(t)\bigl(\dot{z}(t)-\dot{z}^*(t)\bigr)dt \\
 &-\sum_{i=1}^N\int_{t_0}^{t_i} a(t)\bigl(f(t,z,\theta)-f(t,z^*,\theta^*)\bigr)dt.
  \end{aligned}
 \end{equation}
 
 Compute the difference term by term as in  Chapter 3, we obtain 
 \begin{equation}
 \begin{aligned}
 J(\theta) - J(\theta^*) = & \sum_{i=1}^N\bigl(a(t_i)+\frac{\partial L}{\partial z(t_i)}|_*\bigr)\epsilon \eta(t) \\
 &- \sum_{i=1}^N\int_{t_0}^{t_i}\bigl(\dot{a}(t)+a(t)\frac{\partial f}{\partial z}|_*\bigr)\epsilon \eta(t)dt \\
 & -\sum_{i=1}^N\int_{t_0}^{t_i}a(t)\frac{\partial f}{\partial \theta}|_*\epsilon\zeta(t)dt+ \mathcal{O}(\epsilon^2).\\ 
 \end{aligned}
 \end{equation}
 
Let the adjoint equation as
\begin{equation}\label{eq.adjoint}
\dot{a}(t)=-a(t)\frac{\partial f}{\partial z}|_*, \quad \text{with } a(t_i)=-\frac{\partial L}{\partial z(t_i)}|_*,\, i=1,2,\cdots, N,
\end{equation}
we have the compact form of the difference as 
\begin{equation}
J(\theta) - J(\theta^*) =  -\sum_{i=1}^N\int_{t_0}^{t_i}a(t)\frac{\partial f}{\partial \theta}|_*\epsilon\zeta(t)dt + \mathcal{O}(\epsilon^2).
\end{equation}
Further when $\theta(t)\equiv\theta$, i.e., $\zeta(t)\equiv \zeta$, we have the gradients of the Lagrange loss as 
\begin{equation}\label{eq.lossg}
\frac{d J(\theta)}{d \theta} =  -\sum_{i=1}^N\int_{t_0}^{t_i}a(t)\frac{\partial f}{\partial \theta}|_*dt.
\end{equation}

The adjoint equation \eqref{eq.adjoint} shows that the adjoint variable $a(t)$ can be computed by a differential equation from $t_N$ to $t_0$ but the $a(t)$ is piecewise computed in each interval of $[t_{i-1},t_{i}]$ for $i=N,N-1,\cdots,1$ with different initial value $-\frac{\partial L}{\partial z(t_i)}|_*$. And the gradients of the loss \eqref{eq.lossg} shows that the gradients is not just the simple integration about the adjoint variable, but a summation of different parts with a coefficient $N-i+1$, i.e., the loss gradient \eqref{eq.lossg} read 
\begin{equation}
\frac{d J(\theta)}{d \theta} =  -\sum_{i=1}^N\int_{t_0}^{t_i}a(t)\frac{\partial f}{\partial \theta}|_*dt = -\sum_{i=N}^1 (N-i+1)\int_{t_{i-1}}^{t_i} a(t)\frac{\partial f}{\partial \theta}|_*dt.
\end{equation}
Such form is different from that in \cite{chen2018neural}.

\section{The adjoint operator derivation for the neural ordinary differential equations}

In this section, the operator adjoint method is performed on the neural ordinary differential equations and we will show that the term ``adjoint" may be more related with the operator adjoint in the adjoint methods.

\subsection{General statements of the adjoint operator derivation}

The adjoint method is more related to the adjoint operator and in this subsection, we will show that how the adjoint operator method is performed to derive the loss gradient with respect to the learning parameters. 

Given the general optimization problem

\begin{equation}
\label{eq.general}
\begin{aligned}
\min_{\theta} \quad &L(z)\\
\textrm{s.t.} \quad & \mathcal{T}z=f,\\
\end{aligned}
\end{equation}
where the loss $L(z)$ is a functional of the function $z$, $\mathcal{T} = \mathcal{T}(\theta)$ is the linear operator performed on function $z=z(\theta)$, and $f=f(z,\theta)$ is a nonlinear function.

The loss gradient have the form
\begin{equation}
\frac{\partial L}{\partial \theta} = \langle\frac{\partial L}{\partial z},\frac{\partial z}{\partial \theta}\rangle,
\end{equation}
where $\frac{\partial L}{\partial z}$ can be derived simply and the inner product is done in the space where $z$ exists. 

Perform the derivation to $\theta$ on the constraint equation of the optimization problem \eqref{eq.general}, we have 
\begin{equation}
\frac{\pl\mathcal{T}}{\pl\theta} z + \mathcal{T} \frac{\pl z}{\pl\theta}=\frac{\pl f}{\pl z}\frac{\pl z}{\pl \theta} + \frac{\pl f}{\pl \theta},
\end{equation}
and it follows that 
\begin{equation}
(\mathcal{T} -\frac{\pl f}{\pl z} ) \frac{\pl z}{\pl\theta}=-\frac{\pl\mathcal{T}}{\pl\theta} z+\frac{\pl f}{\pl \theta},
\end{equation}

Suppose there exist an adjoint variable $\lambda$ which satisfies
\begin{equation}\label{eq.lambda}
\langle\frac{\partial L}{\partial z},\frac{\partial z}{\partial \theta}\rangle = \langle\lambda,(\mathcal{T}-\frac{\pl f}{\pl z}) \frac{\partial z}{\partial \theta}\rangle.
\end{equation}

Then we have the loss gradient has the following simple form 
\begin{equation}
\frac{\partial L}{\partial \theta}=\langle\frac{\partial L}{\partial z},\frac{\partial z}{\partial \theta}\rangle = \langle\lambda,(\mathcal{T}-\frac{\pl f}{\pl z}) \frac{\partial z}{\partial \theta}\rangle = \langle\lambda,-\frac{\partial \mathcal{T}}{\partial \theta}z+\frac{\pl f}{\pl \theta}\rangle.
\end{equation}

The key problem becomes how to find the $\lambda$.
Recall that $\lambda$ satisfies the \eqref{eq.lambda}, and we have 
\begin{equation}\label{eq.adjoint}
\langle\frac{\partial L}{\partial z},\frac{\partial z}{\partial \theta}\rangle = \langle\lambda,(\mathcal{T}-\frac{\pl f}{\pl z}) \frac{\partial z}{\partial \theta}\rangle = \langle(\mathcal{T}^*-\frac{\pl f}{\pl z})\lambda, \frac{\partial z}{\partial \theta}\rangle,
\end{equation}
where we suppose that the adjoint operator $\mathcal{T}^*$ exists.
Hence, $\lambda$ satisfies the following equation 
\begin{equation}\label{eq.al}
(\mathcal{T}^*-\frac{\pl f}{\pl z})\lambda = \frac{\partial L}{\partial z}.
\end{equation}

The operator adjoint methods for the loss gradient of the optimization problem \eqref{eq.general} has the following form:
\begin{align}
\frac{\partial L}{\partial \theta} = \langle\lambda,-\frac{\partial \mathcal{T}}{\partial \theta}z+\frac{\pl f}{\pl \theta}\rangle,\\
\text{where} \quad (\mathcal{T}^*-\frac{\pl f}{\pl z})\lambda = \frac{\partial L}{\partial z}.
\end{align}

\subsection{Operator adjoint methods for neural ODE}
The adjoint forms can be derived by the operator adjoint methods for neural ODE.  

As the loss function is just the function of $z(t_i), i=1, 2,\cdots N$ where $t_i$ is discrete time point, thus
it is needed to derive the adjoint forms for the neural ODE.

Recall that the optimization of the loss with multi label has the following form
The problem reads 
\begin{equation}
\label{eq.oml}
\begin{aligned}
\min_{\theta} \quad &  L(z(t_1),z(t_2),\cdots,z(t_N))\\
\textrm{s.t.} \quad & \dot{z}(t)=f(t,z(t),\theta(t)).\\
\end{aligned}
\end{equation}

To use the adjoint operator, we use the trick of a delta function $\delta(t)$ with 
$$\int\delta(t)d t = 1. $$

And the loss gradient can be extended into the integral of $t$ as follows 
\begin{align}
\frac{\pl L}{\pl \theta} &= \sum_i^N \frac{\pl L}{\pl z(t_i)}\frac{\pl z(t_i)}{\pl \theta}\\
&= \sum_i^N \frac{\pl L}{\pl z(t_i)}\int\frac{\pl z(t)}{\pl \theta}\delta(t-t_i) dt \\
&=\int \frac{\pl z(t)}{\pl \theta}\sum_i^N \frac{\pl L}{\pl z(t_i)}\delta(t-t_i) dt.
\end{align}

Perform the derivation of $\theta$ on the constraint we have 
\begin{equation}
\frac{d z_\theta}{d t}-\frac{\pl f}{\pl z}z_\theta-\frac{\pl f}{\pl \theta}=0,
\end{equation}
and we have 
\begin{equation}
(\frac{d}{d t}-\frac{\pl f}{\pl z})z_\theta = \frac{\pl f}{\pl \theta}.
\end{equation}

Suppose there is adjoint variable $\lambda(t)$ which makes the following form holds 
\begin{equation}
\int_{t_0}^{t_N}\lambda(t)\bigl(\frac{d}{dt}-\frac{\pl f}{\pl z}\bigr)z_\theta(\theta,t)  dt. 
\end{equation}

Integrate by parts, we have 
\begin{align}
\int_{t_0}^{t_N}\lambda(t)\bigl(\frac{d}{dt}-\frac{\pl f}{\pl x}\bigr)z_\theta(\theta,t)  dt & = \lambda \frac{\pl z}{\pl \theta}|_{t_0}^{t_N} + \int_{t_0}^{t_N}(-\frac{d\lambda}{dt}-\lambda\frac{\pl f}{\pl z})z_\theta dt\\
& = \int_{t_0}^{t_N}\frac{\pl z}{\pl \theta}(-\frac{d\lambda}{dt}-\lambda\frac{\pl f}{\pl z}) dt.
\end{align}
Here we have used the fact $\frac{\pl z(t_0)}{\pl \theta}=0$ in the neural network and we assumed that $\lambda(t_N)=0$.

Let 
\begin{equation}
-\frac{d\lambda}{dt}-\lambda\frac{\pl f}{\pl z} = \sum_i^N\frac{\pl L}{\pl z(t_i)}\delta(t-t_i),
\end{equation}
and we have the simplified form of the loss gradient 
\begin{equation}
\frac{\pl L}{\pl \theta} = \int_{t_0}^{t_N} \lambda \frac{\pl f}{\pl \theta} dt
\end{equation}
where the adjoint variable $\lambda(t)$ satisfies the following equation 
\begin{equation}
-\frac{d \lambda}{d t} = \lambda\frac{\pl f}{\pl z} + \sum_i^N\frac{\pl L}{\pl z(t_i)}\delta(t-t_i) 
\end{equation}
with the initial value $\lambda(t_N)=0$. 
Further the adjoint variable have the equivalent  form  
\begin{equation}
-\frac{d \lambda}{d t} = \lambda\frac{\pl f}{\pl x} 
\end{equation}
with initial value $\lambda^{initial}(t_i) = \lambda(t_i)+\frac{\pl f}{\pl x(t_i)}$ where $i = N, N-1,\cdots, 1$.  This corresponds to the $\mathcal{T}^{-1}=-\frac{d}{dt}$ when $\mathcal{T}=\frac{d}{dt}$.

\subsection{Operator adjoint methods for discrete form of neural ODE}
In the last subsection, we have given the adjoint form of the neural ODE, but they are in the continuous form. In numerical experiments of neural differential networks, the forward and backward propagations are actually done in the discrete form. So, if we compute the forward propagation by a discrete form of the neural ODE and compute the alongside back propagation with another discrete form of the adjoint equations, then the difference of discrete points and discrete schemes we choose will certainly affect the performance of the training procedure which is based on the gradient decent of the loss.  

The forward propagation is actually a discrete form of the ordinary equations, why do we not perform adjoint operator analysis on the same discrete form in the back propagation for an exact form of the adjoint getting ride of both the redundancy of the direct propagation and the inaccuracy of the adjoint forms of the discrete continuous case? In this subsection, we directly perform the analysis on the discrete form of the forward propagation and reveal the corresponding discrete adjoint forms.

\begin{remark}
 Be careful with the influence of the boundary. For implicit numerical form, it is needed to discuss detailedly.   
\end{remark}

The discrete form of the optimization problem gives
\begin{equation}
\label{eq.dl}
\begin{aligned}
\min_{\theta} \quad &L(z(t_{n_1}),\cdots, z(t_{n_i}),\cdots, z(t_{n_M}))\\
\textrm{s.t.} \quad & P_nz_n = \Delta t f(z_n,\theta,t), \\
\end{aligned}
\end{equation}
where $P_n$ is the linear combination of $k$-shift operators $T_k$, i.e., 
\begin{equation}
P_n = \sum_{k=1}^K \alpha_kT_k, \quad \text{with} \quad T_k z_n = z_{n+k}.
\end{equation}

Thus the derivatives of the loss with respect to the learning parameter reads
\begin{align}
\frac{\pl L}{\pl \theta} &=  \sum_{i=1}^M\frac{\pl L}{\pl z_{n_i}}
\frac{\pl z_{n_i}}{\pl \theta}\\
&=\sum_{i=1}^M \frac{\pl L}{\pl z_{n_i}}\sum_{n=1}^N\frac{\pl z_n}{\pl \theta}\delta_{n,n_i}\\
&=\sum_{n=1}^N\sum_{i=1}^M \frac{\pl L}{\pl z_{n_i}}\delta_{n,n_i}\frac{\pl z_n}{\pl \theta}.
\end{align}

Take the derivatives of $\theta$ on the both side of the constraint as before and we have 
\begin{equation}
(P_n-\Delta t \frac{\pl f}{\pl z_n})\frac{\pl z_n}{\pl \theta} = 
\Delta t \frac{\pl f}{\pl \theta}.
\end{equation}

Applying the operator adjoint strategy, we have
\begin{align}
\sum_{n=1}^N\sum_{i=1}^M \frac{\pl L}{\pl z_{n_i}}
\delta_{n,n_i}\frac{\pl z_n}{\pl \theta} &= \sum_{n=1}^N \lambda_n(P_n-\Delta t\frac{\pl f}{\pl z_n})\frac{\pl z_n}{\pl \theta}\\
&=\sum_{n=1}^N\lambda_n\Delta t\frac{\pl f}{\pl \theta}.
\end{align}
Thus we have 
\begin{equation}
\frac{\pl L}{\pl \theta} = \sum_{n=1}^N\lambda_n\Delta t\frac{\pl f}{\pl \theta},
\end{equation}
where the adjoint variable $\lambda_n, n=1,2,\cdots,N$ is given by 
\begin{equation}
(P_n^{-1}-\Delta t\frac{\pl f}{\pl z_n})\lambda_n = \sum_{i=1}^M\frac{\pl L}{\pl z_{n_i}}\delta_{n,n_i}.
\end{equation}
Further by the similar analysis with the continuous case, we have the adjoint equation for $\lambda_n$
\begin{equation}
P_n^{-1}\lambda_n= \Delta t\frac{\pl f}{\pl z_n},\end{equation}
with the initial value $\lambda^{initial}_{n_i}=\lambda_{n_i}+\frac{\pl L}{\pl z_{n_i}}$ and $\lambda(N)=0$ for $n = N,\cdots, n_i, \cdots, n_1$. 
Here it is easy to check that $P_n^{-1} = \sum_{k=1}^K \alpha_kT_{-k}.$

\subsection{The gaps between the original adjoints and the direct back propagation with auto differentiation.}

There are gaps of performance between the direct back propagation and the adjoint forms, see (D) and (F) in Figure \ref{fig.sd}.

The neural ODE is a continuous form in the analysis, but the forward propagation is actually done in the discrete form, i.e., when we solve a neural ode, the chosen ODE solver is applied thus the structure of the forward propagation scheme is actually integrating the discrete neural ODE, as shown in (A) and (C) of Figure \ref{fig.sd}.  On the backward propagation, the adjoint form called adjoint ODE (B) is derived by the analysis of the neural ODE in the continuous form. The adjoint ODE (B) is the adjoint form of the neural ODE (A) but they are both in the continuous form. Then traditional algorithm solves adjoint ode (B) by its discrete adjoint ode (D). There arises a question: if (D) gives the right results of the adjoint form of (C)? To answer this question, we have provided the adjoint form (E) for the discrete neural ODE (C) in the last section. From the form of (E), we can tell that (E) is the form of (D) but requiring (D) has the same discrete scheme with (C).   Meanwhile if (C) and (D) are discretized in different ways, then (D) is no longer the right adjoint form of (C) even if its continuous form (B) is the adjoint form of (A)! 

In the traditional computation, (C) is computed and (D) is used as its adjoint form, that is certainly not right, as (D) is not always the adjoint form of (C) only if they have the same discrete scheme. In fact the direct back propagation form (F) is the right derivation of the loss gradient of (C).  (D) gives the same result with (F) if and only if (D) and (C) have the same discrete scheme. Here we must emphasize that the same discrete scheme means not only the same scheme to discrete the continuous equation but also the same iteration points along the integral. The results can be easily reduced from the last section.

\begin{figure}[htp]
\includegraphics[width=\textwidth]{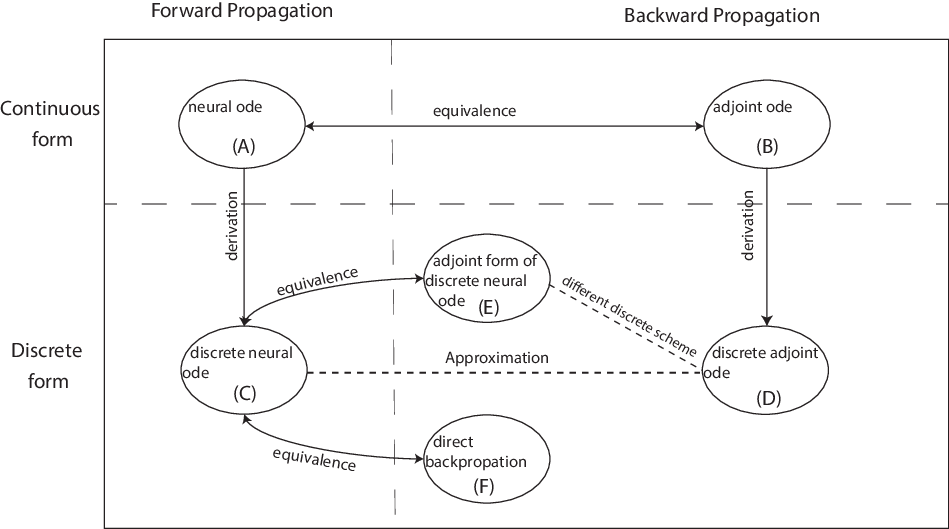}
\caption{The schemes' differences.}
\label{fig.sd}
\end{figure}


\section{Conclusion}
In this note, we mainly use the calculus of variation widely used in optimal control theory to obtain the rigorous proof of the adjoint method for the control problem of neural ode. We show that the adjoint method gives the gradients of the Lagrange cost $J(\theta)$ not the cost $L(z(t_f))$. Further, we show that the adjoint variable can be simply represented by the gradients of cost $L(z(t_f))$ to the trajectory variable $z(t)$. It is a typical property because of the control parameter is a constant with time. Besides, we proposed a simple proof of the adjoint method by a direct differential calculations. The simple but not rigorous proof shows that the derivation can be much simpler if strictness is not needed.

\section{Acknowledgement}
The author wishes to extend heartfelt gratitude to Prof. Zuoqiang Shi for the insightful discussions related to this note. Additionally, the author appreciates all readers and encourages anyone who finds errors to report them via email at pisquare@microsoft.com or hpp1681@gmail.com. Should you find this note beneficial, the author would be pleased if you could cite it in your work.

\bibliography{adjointreferences}

\begin{thebibliography}{1}

\bibitem{chen2018neural}
Tian~Qi Chen, Yulia Rubanova, Jesse Bettencourt, and David~K Duvenaud.
\newblock Neural ordinary differential equations.
\newblock In {\em Advances in neural information processing systems}, pages 6571--6583, 2018.

\bibitem{liberzon2011calculus}
Daniel Liberzon.
\newblock {\em Calculus of variations and optimal control theory: a concise introduction}.
\newblock Princeton University Press, 2011.

\bibitem{kidger2020hey}
Patrick Kidger, Ricky~TQ Chen, and Terry Lyons.
\newblock " hey, that's not an ode": Faster ode adjoints with 12 lines of code.
\newblock {\em arXiv preprint arXiv:2009.09457}, 2020.

\bibitem{zhuang2019ordinary}
Juntang Zhuang, Nicha Dvornek, Xiaoxiao Li, and James~S Duncan.
\newblock Ordinary differential equations on graph networks.
\newblock 2019.

\bibitem{hu2020revealing}
Pipi Hu, Wuyue Yang, Yi~Zhu, and Liu Hong.
\newblock Revealing hidden dynamics from time-series data by odenet.
\newblock {\em arXiv preprint arXiv:2005.04849}, 2020.

\end{thebibliography}
\bibliographystyle{unsrt}

\end{document}